\theoremstyle{plain}
\newtheorem{thm}{Theorem}[section]
\newtheorem{lemma}[thm]{Lemma}
\newtheorem{prop}[thm]{Proposition}
\numberwithin{equation}{section}
\numberwithin{figure}{section}
\theoremstyle{definition}
\newtheorem{definition}[thm]{Definition}
\newtheorem{question}[thm]{Question}
\newtheorem{construction}[thm]{Construction}
\newcommand{\G}{\Gamma}
\newcommand{\Z}{\mathbb{Z}}
\newcommand{\R}{\mathbb{R}}
\newcommand{\cP}{\mathcal{P}}
\newcommand{\cD}{\mathcal{D}}
\newcommand{\cX}{\mathcal{X}}
\newcommand{\cY}{\mathcal{Y}}
\newcommand{\cW}{\mathcal{W}}
\newcommand{\cZ}{\mathcal{Z}}
\newcommand{\hY}{\widehat{\mathcal{Y}}}
\newcommand{\hZ}{\widehat{\mathcal{Z}}}
\def\polhk#1{\setbox0=\hbox{#1}{\ooalign{\hidewidth
    \lower1.0ex\hbox{$\,\lhook$}\hidewidth\crcr\unhbox0}}}
\title{Topological rigidity fails for quotients of the Davis complex}
\author{Emily Stark}
\date{\today}
\thanks{2010 {\it Mathematics Subject Classification.} Primary 51F15; Secondary 20E07; 20F55; 20F67}
\begin{document}
 
\maketitle

\begin{abstract} 
A Coxeter group acts properly and cocompactly by isometries on the Davis complex for the group; we call the quotient of the Davis complex under this action the {\it Davis orbicomplex} for the group. We prove the set of finite covers of the Davis orbicomplexes for the set of one-ended Coxeter groups is not topologically rigid. We exhibit a quotient of a Davis complex by a one-ended right-angled Coxeter group which has two finite covers that are homotopy equivalent but not homeomorphic. We discuss consequences for the abstract commensurability classification of Coxeter groups.

\end{abstract}

\section{Introduction}

The notion of topological rigidity has its roots in the setting of manifolds. A closed manifold $M$ is called {\it topologically rigid} if every homotopy equivalence from $M$ to another closed manifold is homotopic to a homeomorphism. 
A well-known example of this phenomenon is the {\it Poincar\'{e} Conjecture}, which states that the $3$-sphere is topologically rigid and was proven by Perelman. Many lens spaces are examples of $3$-manifolds that are not topologically rigid.  The {\it Borel conjecture} states that closed aspherical manifolds are topologically rigid. The conjecture was proven for manifolds of dimension at least five whose fundamental group is either Gromov hyperbolic or CAT$(0)$ by Bartels and L\"{u}ck \cite{bartelslueck}, building on the techniques of Farrell and Jones \cite{farrelljones}.

The definition of topological rigidity extends from manifolds to orbifolds and to classes of topological spaces. Background on orbifolds and orbifold homeomorphisms is given by Kapovich \cite[Chapter 6]{kapovich} and Ratcliffe \cite[Chapter 13]{ratcliffe}. An {\it orbicomplex} is the union of orbifolds identified along homeomorphic suborbifolds, and the notion of homeomorphism extends to these spaces as well. 

\begin{definition}
 Let $\cX$ be a class of topological spaces, orbifolds, or orbicomplexes. The class $\cX$ is said to be {\it topologically rigid} if for all $X_1, X_2 \in \cX$, if $\pi_1(X_1) \cong \pi_1(X_2)$, then $X_1$ and $X_2$ are homeomorphic.
\end{definition}

Simplicial graphs provide a simple example of a class of spaces that is not topologically rigid. 
More generally, for graphs of spaces with one-ended universal covers the presence of topological rigidity is more subtle. Lafont proved that {\it simple, thick, $n$-dimensional hyperbolic piecewise-manifolds} are topologically rigid for $n \geq 2$ \cite{lafont-2}\cite{lafont-3}\cite{lafont}. 
In dimension two, these spaces decompose as graphs of spaces with vertex spaces that are compact hyperbolic surfaces with boundary, edge spaces that are circles, and edge-to-vertex space inclusions that identify the boundary components of the surfaces so that each boundary component is identified to at least two others; the higher-dimensional analogues are similar. 
The orbicomplexes considered in this paper also have hyperbolic  fundamental groups and codimension-1 singularities along embedded locally geodesic $1$-complexes; we show that finite covers of these spaces do not exhibit topological rigidity.  

The spaces studied in this paper have fundamental groups of the following form. If $\G$ is a finite simplicial graph with vertex set $V\G$ and edge set $E\G$, the {\it right-angled Coxeter group $W_{\G}$ with defining graph $\Gamma$} has generating set $V\G$ and relations $s^2=1$ for all $s \in V\G$ and $st = ts$ whenever $\{s,t\} \in E\G$. If $W_{\G}$ is a right-angled Coxeter group, then $W_{\G}$ acts properly and cocompactly by isometries on its {\it Davis complex} $\Sigma_{\G}$. The quotient of this space under the action of the right-angled Coxeter group $W_{\G}$ is called the {\it Davis orbicomplex} $\cD_{\G} := W_{\G} \backslash\backslash \Sigma_{\G}$. Background on Coxeter groups and the Davis complex is given by Davis \cite{davis-book}.
A description of reflection orbicomplexes related to those described here can be found in \cite[Section 5.2]{stark} and \cite[Section 3]{danistarkthomas}.

A natural setting for questions of topological rigidity for spaces with fundamental groups right-angled Coxeter groups and their finite-index subgroups is the set of Davis orbicomplexes and their finite-sheeted covers, as these spaces have a natural orbicomplex structure. If the graph~$\G$ has no edges, so that $W_{\G}$ is a free product of groups isomorphic to $\Z/2\Z$, then the set of Davis orbicomplexes and their finite-sheeted covers is not topologically rigid: such a group is the fundamental group of an orbicomplex which is finitely covered by a finite simplicial graph. So, one may ask if topological rigidity holds if one restricts to one-ended right-angled Coxeter groups. The main result of this paper is the following.

\begin{thm} \label{toprigidity}
 Let $\mathcal{X}$ be the set of finite covers of the Davis orbicomplexes for the set of one-ended right-angled Coxeter groups. The set $\mathcal{X}$ is not topologically rigid. 
\end{thm}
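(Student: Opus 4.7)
The plan is to prove the theorem by explicit construction: exhibit a one-ended right-angled Coxeter group $W_\Gamma$ together with two finite-index subgroups $H_1, H_2 \leq W_\Gamma$ whose associated covers $\cY_i = H_i \backslash\backslash \Sigma_\Gamma$ of the Davis orbicomplex $\cD_\Gamma$ are homotopy equivalent but not homeomorphic. Since these covers are aspherical (the Davis complex is $\CAT(0)$, hence contractible), it suffices to produce an isomorphism $\pi_1(\cY_1) \cong \pi_1(\cY_2)$ together with a topological obstruction to homeomorphism.

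First, I would choose the defining graph $\Gamma$ carefully. To guarantee $W_\Gamma$ is one-ended I would take $\Gamma$ connected with no separating clique, following the standard criterion recorded in Davis's book. To produce an orbicomplex with nontrivial codimension-$1$ singularities, I would make $\Gamma$ branched --- for instance a union of cycles of length at least $5$ glued along a shared subgraph --- so that $\Sigma_\Gamma$ is a $2$-dimensional $\CAT(-1)$ complex formed from hyperbolic planes meeting along a family of geodesic lines, and $\cD_\Gamma$ decomposes as a union of right-angled hyperbolic polygonal $2$-orbifolds glued along reflection orbifold edges.

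Next, I would pass to a torsion-free finite-index subgroup, exploiting the fact that right-angled Coxeter groups are virtually torsion-free. The resulting cover is a \emph{surface amalgam}: a finite union of compact hyperbolic surfaces with boundary glued along their boundary circles according to a finite pattern graph. Within this covering tower, I would build two further finite covers $\cY_1$ and $\cY_2$ whose abstract pattern graphs and constituent surfaces correspond bijectively --- yielding $\pi_1(\cY_1) \cong \pi_1(\cY_2)$ via the evident surface-by-surface and boundary-by-boundary matching, and hence a homotopy equivalence --- but for which the cyclic arrangement of surface pieces incident to some branching circle differs. A carefully chosen twist in the gluing maps will produce topologically distinct $2$-complexes with isomorphic fundamental groups.

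The main obstacle is proving $\cY_1$ and $\cY_2$ are not homeomorphic, since the isomorphism of fundamental groups forces many invariants (Euler characteristic, homology, the abstract JSJ decomposition) to coincide. To distinguish the covers I would use a homeomorphism invariant of the local structure at the branching locus: the combinatorial type of the link of a branching circle in $\cY_i$, recorded as the finite graph describing the cyclic order of surface pieces meeting there. Any homeomorphism $\cY_1 \to \cY_2$ must carry the branching $1$-strata to branching $1$-strata and preserve these links up to combinatorial equivalence; by construction the links differ between the two covers, so no homeomorphism can exist. This yields the desired failure of topological rigidity for the class $\cX$.
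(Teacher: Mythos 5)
Your overall strategy---construct a branched one-ended right-angled Coxeter group, produce finite covers of its Davis orbicomplex that are homotopy equivalent but not homeomorphic, and detect the failure of homeomorphism on the singular locus---is the same as the paper's. However, the specific mechanism you propose for distinguishing the two covers has a genuine gap, and it sits exactly where the content of the theorem lies. You pass to torsion-free covers that are surface amalgams whose ``abstract pattern graphs and constituent surfaces correspond bijectively,'' differing only by ``a carefully chosen twist in the gluing maps,'' and you plan to tell them apart by the ``cyclic order of surface pieces meeting'' a branching circle. But a $2$-complex obtained by gluing surfaces along boundary circles carries no intrinsic cyclic order at a branching circle: the link of an interior point of such a circle is the suspension of $n$ discrete points, and every permutation of the $n$ sheets is realized by a homeomorphism of a neighborhood. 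Moreover, any boundary twist in a gluing map is absorbed by a self-homeomorphism of the adjacent surface piece supported in a collar of its boundary, so two amalgams with the same pieces and the same pattern graph are homeomorphic no matter how you twist. Indeed, Lafont's theorem, cited in the paper, says that simple, thick, $2$-dimensional hyperbolic piecewise-manifolds---precisely the class your construction lands in---\emph{are} topologically rigid, so your candidate pair $\cY_1$, $\cY_2$ would in fact be homeomorphic. There is also a circularity: if the pieces and boundary identifications genuinely match ``boundary-by-boundary,'' that matching already assembles into a homeomorphism, not merely an isomorphism of fundamental groups.

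The paper escapes this trap by arranging for the singular locus to be a \emph{graph with vertices of valence at least three}, not a disjoint union of circles, so that Lafont's rigidity does not apply; the covers $\cY$ and $\cZ$ (built as a tower of explicit degree-$2$ covers realized by reflections and rotations in $\R^3$) have singular graphs that are non-homeomorphic as graphs, which obstructs any homeomorphism of the orbicomplexes. The homotopy equivalence is then obtained not from a piece-by-piece matching of the stratified structure but by collapsing each space onto a regular neighborhood of its singular graph in the plane: the two non-homeomorphic graphs have homeomorphic planar regular neighborhoods (a genus-zero surface with six boundary components), and the orbifold pieces attach to the boundary circles of that surface in the same way for both covers. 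That is the idea missing from your proposal: the obstruction must live in the homeomorphism type of the singular graph itself, because no amount of twisting of gluings along circles can distinguish surface amalgams with matching pieces and pattern.
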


To prove Theorem \ref{toprigidity}, we construct an example of a one-ended right-angled Coxeter group $W_{\G}$  so that if $\cD$ is its Davis orbicomplex, then there exist two finite covers of $\cD$ that have the same fundamental group but are not homeomorphic.
The orbicomplex $\cD$ contains a singular subspace that is finitely covered by a graph; we find two non-homeomorphic covers of the graph that extend to covers of $\cD$ so that the covers have the same fundamental group. 
The construction of $W_{\G}$ and $\cD$ is given in Section~\ref{sec:Davis}, and the finite covers are described in Section~\ref{sec:covers}. 

In Proposition~\ref{prop:tf_covers}, we employ similar ideas to produce further finite covers of $\cD$ that are quotients of the Davis complex by isomorphic torsion-free subgroups of $W_{\G}$ and so that these covers are not homeomorphic.

As shown by Crisp--Paoluzzi \cite{crisp-paoluzzi} using the work of Lafont \cite{lafont}, there are one-ended right-angled Coxeter groups which are not virtually manifold groups for which the Davis orbicomplex together with its finite-sheeted covers is topologically rigid. 
So, we state the following question. 

\begin{question}
 For which set $\cW$ of Coxeter groups is the set of Davis orbicomplexes for groups in $\cW$ together with their finite-sheeted covers topologically rigid? 
\end{question}

Relatedly, Xie \cite{xie} proved the set of quotients of  {\it Fuchsian buildings} by the action of a cocompact lattice is topologically rigid. An interesting problem is to determine the set of lattices in the isometry group of a hyperbolic building which have a set of quotients that is topologically rigid. 

\begin{definition}
 A class of spaces $\mathcal{X}$ is said to be {\it closed under finite covers} if whenever $X \in \mathcal{X}$, all finite-sheeted covering spaces of $X$ are in $\mathcal{X}$. 
\end{definition}

Topological rigidity of $\mathcal{X}$, a class of spaces closed under finite covers, has applications in the study of the abstract commensurability classes of the fundamental groups of spaces in $\mathcal{X}$. 
Recall that two groups are {\it abstractly commensurable} if they contain finite-index subgroups that are isomorphic.
If $\cX$ is a topologically rigid class of spaces closed under finite covers and 
$X_1, X_2 \in \mathcal{X}$, then $\pi_1(X_1)$ and $\pi_1(X_2)$ are abstractly commensurable if and only if $X_1$ and $X_2$ have homeomorphic finite-sheeted covering spaces. Thus, in this setting, topological invariants can be used to distinguish abstract commensurability classes. 
For example, this technique was employed by Crisp--Paoluzzi \cite{crisp-paoluzzi} and Dani-Stark-Thomas \cite{danistarkthomas} for certain right-angled Coxeter groups and by the author for related surface group amalgams \cite{stark}. 
A survey on the use of orbifolds in the study of commensurability classes is given by Walsh \cite{walsh}, and background on commensurability classification is given by Paoluzzi \cite{paoluzzi}. 

This paper was motivated by an interest in understanding the abstract commensurability classes of Coxeter groups. Dani--Thomas \cite{danithomas} provide a quasi-isometry classification within a class of hyperbolic one-ended right-angled Coxeter groups, and in joint work with Dani and Thomas \cite{danistarkthomas}, we refine their work to give an abstract commensurability classification for a subclass of these groups. 
In particular, it was of interest to determine whether topological rigidity holds for finite covers of the Davis orbicomplex, which are natural spaces for right-angled Coxeter groups and their finite-index subgroups. 

\subsection*{Acknowledgments} The author thanks Pallavi Dani and Anne Thomas for enlightening conversations during our work on \cite{danistarkthomas} and for comments on a draft of this paper. The author thanks the anonymous referee for helpful comments. The author was partially supported by the Azrieli Foundation. 
 
\section{The Davis orbicomplex} \label{sec:Davis}

    \begin{figure}[h]
     \begin{overpic}[scale=.3,  tics=5]{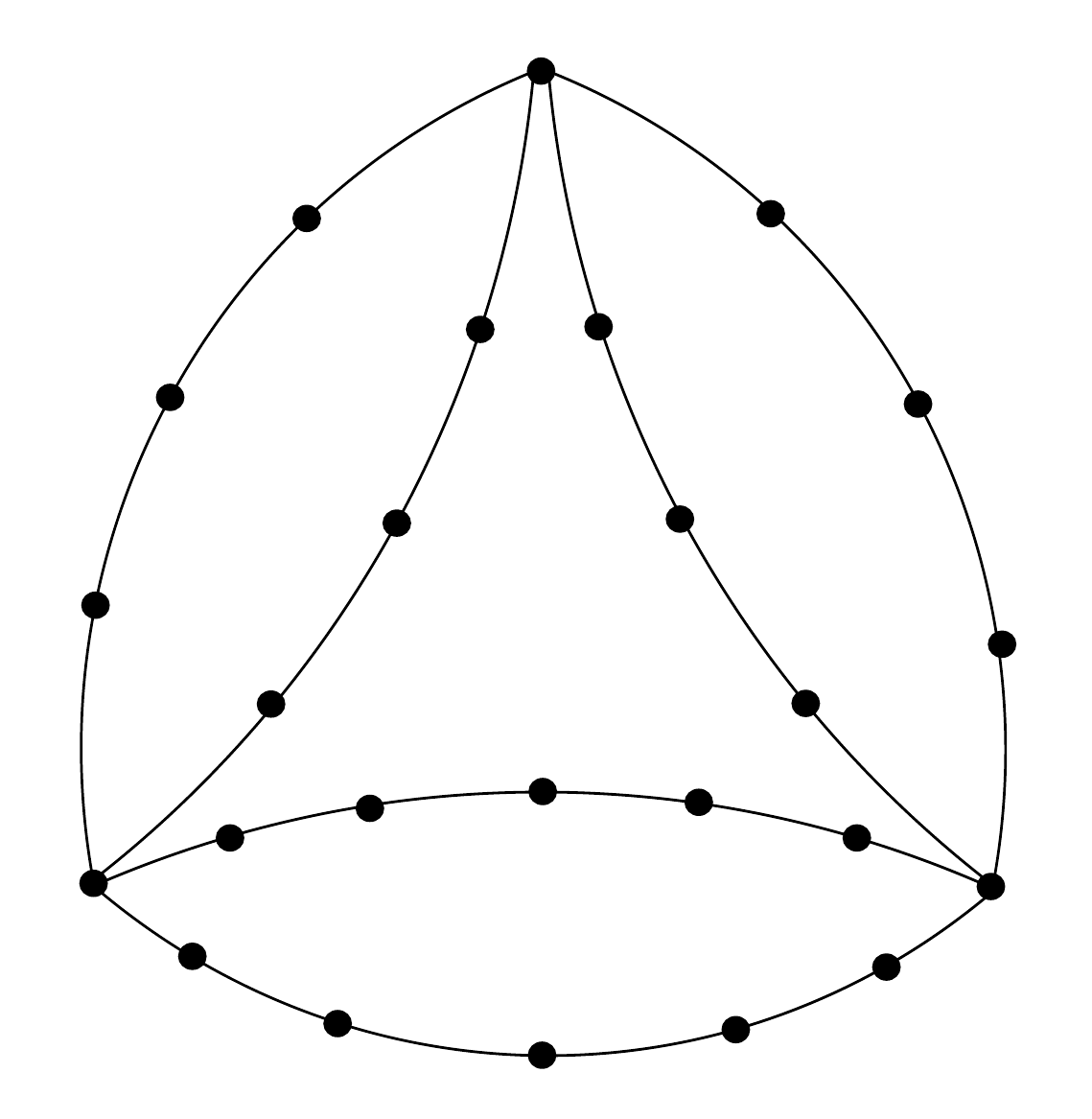}
    \put(44,99){$v_3$}
    \put(-4,15){$v_1$}
    \put(92,15){$v_2$}
     \end{overpic}
     \caption{{\small The graph $\Gamma$ defining the group $W_{\G}$ }}
     \label{figure:graph}
    \end{figure}

     \begin{definition}[The group $W$] \label{def:W}
 Let $W = W_{\G}$ be the right-angled Coxeter group with defining graph $\G$ given in Figure \ref{figure:graph}. 
\end{definition}

\begin{construction}[The Davis orbicomplex $\cD$ for $W$] \label{con:D}
 Let $W$ be the right-angled Coxeter group given in Definition \ref{def:W}. The Davis orbicomplex $\cD$ for $W$ has the following form, which is illustrated in Figure \ref{figure:orbicomplex}. The space $\cD$ is an orbicomplex whose underlying space is topologically the cone on the defining graph $\G$. The space $\cD$ may be viewed as a graph of spaces with vertex spaces $2$-dimensional right-angled reflection orbifolds with boundary, and these orbifolds are identified along their boundary components as follows.

    \begin{figure}
     \begin{overpic}[scale=.73, tics=5]{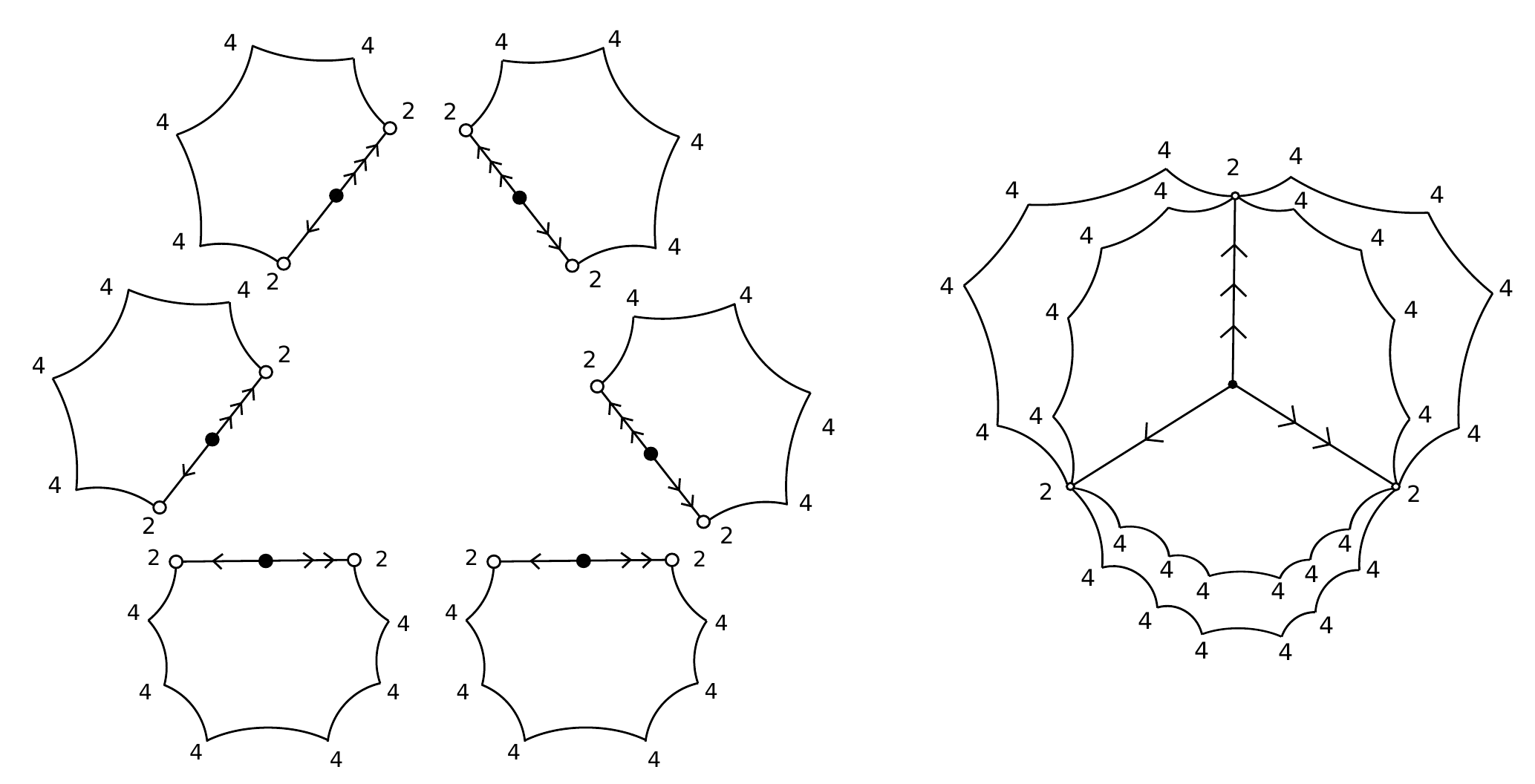}
    \put(70,45){$\cD$}
    \put(0,45){$\cP$}
     \end{overpic}
     \caption{{\small Pictured on the above right is the Davis orbicomplex $\cD$ for the right-angled Coxeter group with defining graph $\G$ given in Definition \ref{def:W}. On the left is illustrated the collection $\cP$ of six right-angled reflection orbifolds that are glued to each other by local isometries to form the Davis orbicomplex. Each edge of these orbifolds is a reflection edge except for the two edges which are glued to other orbifolds as indicated by the arrows. The numbers indicate the order of the isotropy group at the orbifold point. }}
     \label{figure:orbicomplex}
    \end{figure}

 Let $\mathcal{P}$ be the following collection of orbifolds, which will be the vertex spaces of $\cD$. Define a {\it branch} of the graph $\G$ to be an embedded path connecting two vertices of valence four. For a branch $\beta$, let $n_{\beta}$ be the number of vertices of the branch including the endpoints. So, $\G$ has six branches, and if $\beta$ is a branch of $\G$, then $n_{\beta} \in \{5, 7\}$. The right-angled Coxeter group with defining graph a branch $\beta$ is the orbifold fundamental group of the following orbifold $P_{\beta}$. The orbifold $P_{\beta}$ has underlying space a right-angled hyperbolic $(n_{\beta}+1)$-gon, $n_{\beta}$ reflection edges, and one non-reflection edge of length $L>0$. Let $\cP = \{ P_{\beta} \, | \, \beta$ is a branch of $\G\}$; the six orbifolds in $\cP$ are illustrated on the left of Figure \ref{figure:orbicomplex}. 
 
 Identify the orbifolds in $\cP$ along convex suborbifolds to form the orbicomplex $\cD$ as follows. For each orbifold in $\cP$, attach a $0$-cell at the midpoint of each non-reflection edge, creating two non-reflection edges of length $\frac{L}{2}$. Label these non-reflection edges as follows. First, label the three vertices of $\Gamma$ of valence four $\{v_1, v_2, v_3\}$ as illustrated in Figure \ref{figure:graph}. Then, label a non-reflection edge $e_i$ if the edge is incident to the reflection edge corresponding to the vertex $v_i$; this labeling is indicated using arrows in Figure \ref{figure:orbicomplex}. To build the Davis orbicomplex $\cD$, identify the middle vertex of the two non-reflection edges in each polygon in $\mathcal{P}$ and all non-reflection edges of the same label as shown on the right of Figure \ref{figure:orbicomplex}.
 
 It remains to check that the orbifold fundamental group of $\cD$ is the right-angled Coxeter group $W_{\G}$. Indeed, gluing non-reflection edges with the same label to form a single edge creates a single reflection wall perpendicular to this edge; this gluing corresponds to identifying the endpoints of the branches of $\G$ to form $\G$. The claim then follows from arguments similar to those found in \cite[Section 3]{danistarkthomas}. 
\end{construction}

\section{The set of finite-sheeted covers of $\cD$ is not topologically rigid.} \label{sec:covers}

The covers of $\cD$ restricted to the reflection orbifolds in $\cP$ are given by the following two maps, which are illustrated in Figure \ref{figure:2fold}. 

    \begin{figure}
     \begin{overpic}[scale=.4, tics=5]{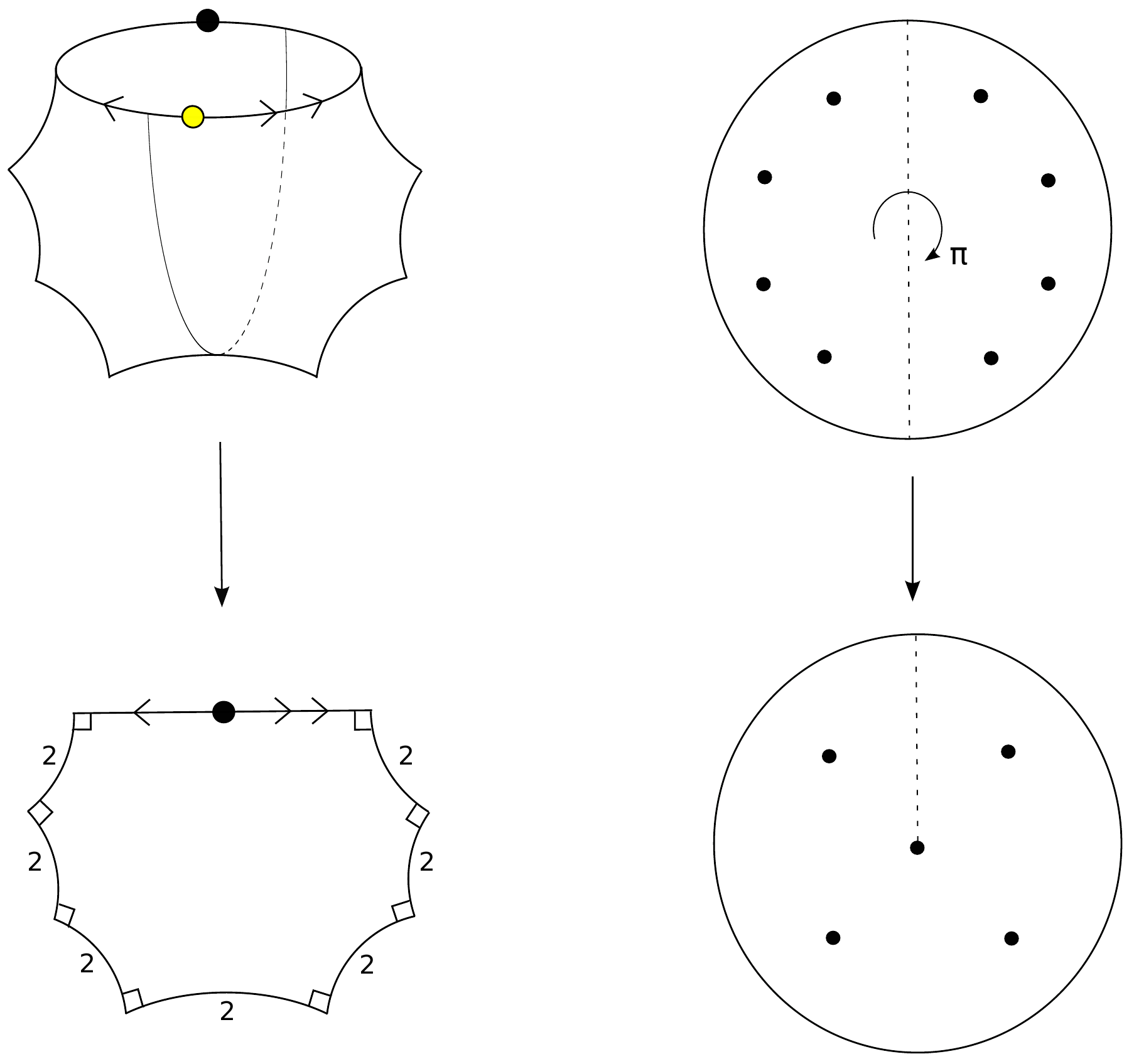}
     \put(22,46){$2$}
     \put(82,46){$2$}
     \put(-3,77){\Small{$2$}}
     \put(-1,67){\Small{$2$}}
     \put(5,59){\Small{$2$}}
     \put(30,59){\Small{$2$}}
     \put(38,67){\Small{$2$}}
     \put(39,77){\Small{$2$}}
     \put(4,30){\Small{$2$}}
     \put(34,30){\Small{$2$}}
     \put(-1,21){\Small{$4$}}
     \put(39,21){\Small{$4$}}
     \put(1,11){\Small{$4$}}
     \put(38,11){\Small{$4$}}
     \put(8,3){\Small{$4$}}
     \put(30,3){\Small{$4$}}
     \put(73,86.5){\Small{$2$}}
     \put(86,86.5){\Small{$2$}}
     \put(66,79){\Small{$2$}}
     \put(91,79){\Small{$2$}}
     \put(66,70){\Small{$2$}}
     \put(91,70){\Small{$2$}}
     \put(72,64){\Small{$2$}}
     \put(86,64){\Small{$2$}}
     \put(72,29){\Small{$2$}}
     \put(72,12.5){\Small{$2$}}
     \put(88,29){\Small{$2$}}
     \put(88,12.5){\Small{$2$}}
     \put(80,14){\Small{$2$}}
     \end{overpic}
     \caption{{\small  Illustrated above are two orbifold covers. On the left, the group $\Z/2\Z$ acts by reflection, identifying the yellow and black vertices, and with quotient space a reflection orbifold in $\cP$. On the right, $\Z/2\Z$ acts by rotation by $\pi$. }}
     \label{figure:2fold}
    \end{figure}

\begin{lemma} \label{lemma:2fold}
 Let $D^2(\,\underbrace{2, \ldots, 2}_\text{$n$}\,)$ denote the orbifold with underlying space a disk and with ramification locus $n$ cone points of order $2$. 
 \begin{enumerate}
  \item[(a)] Each orbifold in $\mathcal{P}$ with $n$ reflection edges is covered by $D^2(\,\underbrace{2, \ldots, 2}_\text{$n-1$}\,)$.
  \item[(b)] The orbifold $D^2(\,\underbrace{2, \ldots, 2}_\text{$2m$}\,)$ double covers $D^2(\,\underbrace{2, \ldots, 2}_\text{$m+1$}\,)$.
 \end{enumerate}
\end{lemma}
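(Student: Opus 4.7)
For part (a), my plan is to construct the double cover explicitly by doubling $P_\beta$ along its $n$ reflection edges, as depicted on the left of Figure~\ref{figure:2fold}. The orbifold fundamental group $\pi_1^{orb}(P_\beta)$ is the right-angled Coxeter group on generators $s_1,\ldots,s_n$ (the reflections in the reflection edges) with relations $s_i^2=1$ and $(s_is_{i+1})^2=1$. The homomorphism $\phi:\pi_1^{orb}(P_\beta)\to \Z/2\Z$ sending each $s_i$ to the nontrivial element is well-defined on these relations, and its kernel $K$ (the orientation-preserving subgroup) has index $2$. The corresponding cover is obtained by taking two copies $P^+$ and $P^-$ of the underlying $(n+1)$-gon and identifying them along each of the $n$ reflection edges, leaving the two non-reflection edges unglued to form the orbifold boundary. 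Since two $(n+1)$-gons glued along a path of $n$ edges produce a space homeomorphic to a disk, the underlying space of the cover is a disk whose boundary is the union of the two leftover non-reflection edges. At each of the $n-1$ interior corner points (those joining consecutive reflection edges in the original polygon), the isotropy in $\pi_1^{orb}(P_\beta)$ is dihedral of order $4$; intersected with $K$ this is the rotation subgroup of order $2$, so each such corner becomes a cone point of order $2$. At each of the two corners adjacent to the non-reflection edge, the isotropy is the $\Z/2\Z$ generated by the single adjacent reflection, which intersects $K$ trivially, producing smooth boundary points. Thus the cover is $D^2(\underbrace{2,\ldots,2}_{n-1})$.

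For part (b), I would realize $D^2(\underbrace{2,\ldots,2}_{2m})$ concretely with its $2m$ cone points placed symmetrically about the center of the disk so that the rotation $R$ by angle $\pi$ about the center is an orbifold automorphism; since the orbifold structure depends only on the combinatorial data, this realization is always available. The required double cover is then the quotient map $D^2(\underbrace{2,\ldots,2}_{2m})\to D^2(\underbrace{2,\ldots,2}_{2m})/\langle R\rangle$, as depicted on the right of Figure~\ref{figure:2fold}. Here $R$ acts freely on the boundary circle (with quotient a circle) and has exactly one interior fixed point, the center. The $2m$ cone points split into $m$ pairs permuted freely by $R$ and project to $m$ cone points of order $2$ downstairs. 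The center is a regular interior point upstairs, but its image acquires the $\Z/2\Z$ isotropy from $R$, yielding one additional cone point of order $2$. Hence the quotient is $D^2(\underbrace{2,\ldots,2}_{m+1})$, as desired.

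Neither construction presents a serious obstacle; the main task is to track the local isotropy carefully at each corner or cone point and to recognize the topology of the quotient. As a sanity check, one may verify that the orbifold Euler characteristic is multiplied by $2$ in each case, using $\chi^{orb}(D^2(\underbrace{2,\ldots,2}_k))=1-k/2$ together with the standard computation $\chi^{orb}(P_\beta)=(3-n)/4$ for the reflection polygon; both checks come out correctly.
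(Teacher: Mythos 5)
Your proposal is correct and realizes both covers by the same maps as the paper: part (a) via the reflection involution (you build it from below by doubling $P_\beta$ along its reflection edges and identifying the kernel of the parity map, which is exactly the deck group of the paper's reflection across a diameter), and part (b) via rotation by $\pi$ about a non-singular central point. Your isotropy bookkeeping and Euler characteristic checks are accurate; the paper's proof is simply a terser statement of the same two constructions.
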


\begin{proof}
 The first covering map is realized by reflection: arrange the cone points along a diameter of the disk and reflect across this segment. The second covering map is realized by rotation: arrange the $2m$ cone points symmetrically about a central (non-orbifold) point in the disk and rotate by~$\pi$. 
\end{proof}

    \begin{figure}
     \begin{overpic}[scale=.55, tics=5]{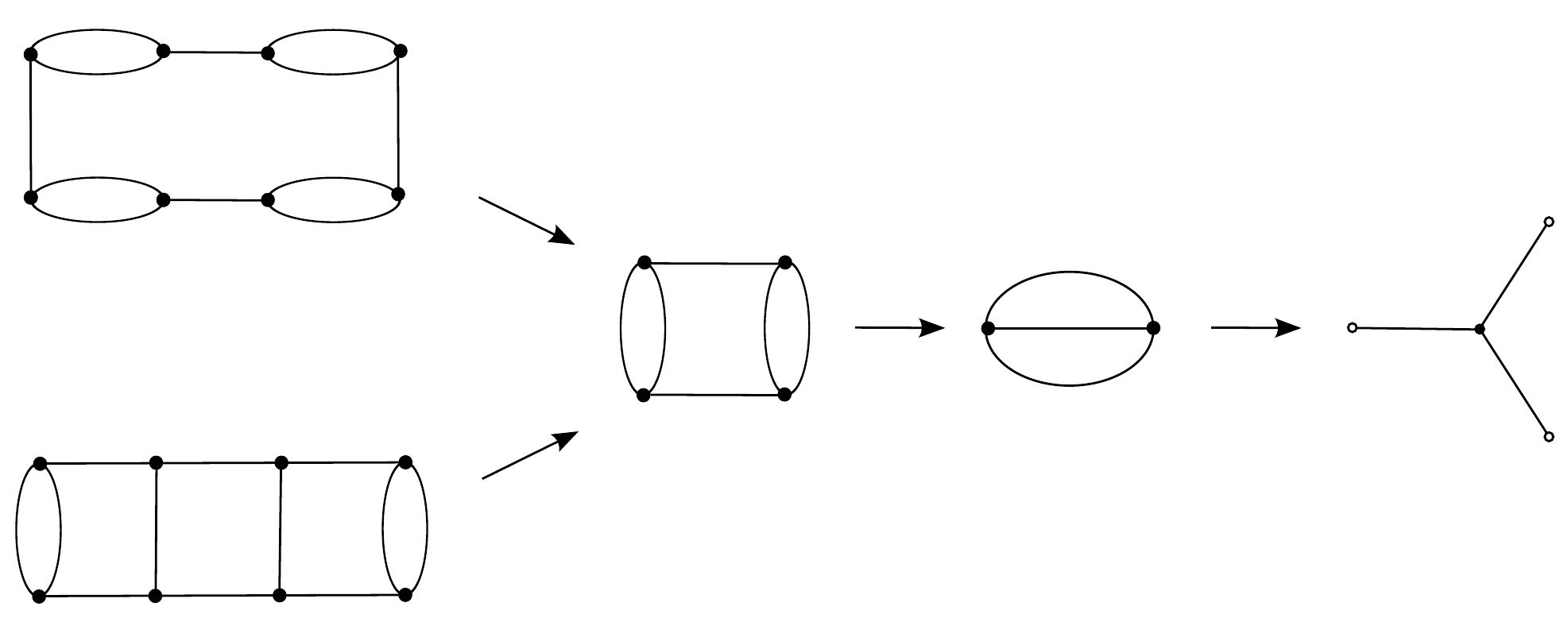}
     \put(33,6){\small{2}}
     \put(33,27){\small{2}}
     \put(57,20){\small{2}}
     \put(79,20){\small{2}}
     \put(86,20){\small{2}}
     \put(100,11){\small{2}}
     \put(100,26){\small{2}}
    \end{overpic}
     \caption{{\small Non-homeomorphic covers of the singular subspace, a $1$-dimensional orbicomplex with ramification points of order two. The three graph covering maps can be realized by rotation by $\pi$ about a center point in the embedding of the graph in the plane, and the orbifold covering map can be realized by reflection about a vertical line in the $\Theta$-graph. }}
     \label{figure:singular}
    \end{figure}

\begin{proof}[Proof of Theorem \ref{toprigidity}]
 
Let $\cD \in \mathcal{X}$ be the Davis orbicomplex
defined in Construction \ref{con:D} and
illustrated in Figure \ref{figure:orbicomplex}. To prove the class of spaces $\mathcal{X}$ is not topologically rigid, we will exhibit two covers of $\cD$ with the same fundamental group that are not homeomorphic. 

The orbicomplex $\cD$ has a singular subspace with underlying space a tripod formed by gluing together the non-reflection edges of the orbifolds in $\cP$; to prove the finite covers constructed are not homeomorphic, we prove the covers restricted to the singular subspace are not homeomorphic. The singular subspace of $\cD$ is a $1$-dimensional orbicomplex with underlying space a star on three vertices and so that each vertex of valence one is a ramification point of order two. The singular subspace and the covers restricted to the singular subspace are drawn in Figure \ref{figure:singular}. We prove that these graph coverings can be extended to finite coverings of the Davis orbicomplex $\cD$.

To construct two covers of $\cD$ that are not homeomorphic, we first construct covers of degree two, $\cX_2 \xrightarrow{2} \cX_1 \xrightarrow{2} \cD$, and then two further covers of degree two, $\cY \xrightarrow{2} \cX_2$ and $\cZ \xrightarrow{2} \cX_2$ so that $\cY$ and $\cZ$ are not homeomorphic but are homotopy equivalent. 

We begin by describing the cover $\cX_1 \xrightarrow{2} \cD$. This cover is illustrated in Figure \ref{figure:X1overD}. The space $\cX_1$ consists of two copies of $D^2(2,2,2,2,2,2)$, labeled $D_1$ and $D_2$, and four copies of $D^2(2,2,2,2)$, labeled $D_3, D_4, D_5, D_6$. To define the identification of these orbifolds, label the boundary circle of $D_i$ by first subdividing it into two segments of equal length by adding vertices $x_i$ and $y_i$. Label one oriented edge $\{x_i, y_i\}$ by $d_i$ and the other by $d_i'$. To form $\cX_1$, identify all vertices in $\{x_i \, | \, 1 \leq i \leq 6\}$ to form a vertex $x$ and identify all vertices in $\{y_i \, | \, 1 \leq i \leq 6\}$ to form a vertex~$y$. Identify edges $\{d_1, d_2, d_3, d_4\}$ to form a single edge $c_1$; identify edges $\{d_1',d_2', d_5',d_6'\}$ to form a single edge $c_2$; and, identify edges $\{d_3', d_4', d_5, d_6\}$ to form a single edge $c_3$. Then, for each $i$, the orbifold $D_i \subset \cX_1$ covers an orbifold $P_{\beta} \subset \cD$, so that if the boundary of $D_i$ is labeled $c_jc_k^{-1}$, then this curve double-covers the non-reflection edge of $P_{\beta}$ labeled $e_j^{-1}e_k$. These covering maps agree along the intersection of the set $\{D_i \, | \, 1 \leq i \leq 6\}$ in $\cX_1$, and hence the union of these spaces covers $\cD$, the union of the $P_{\beta}$, by degree two.

    \begin{figure}
     \begin{overpic}[scale=.8, tics=5]{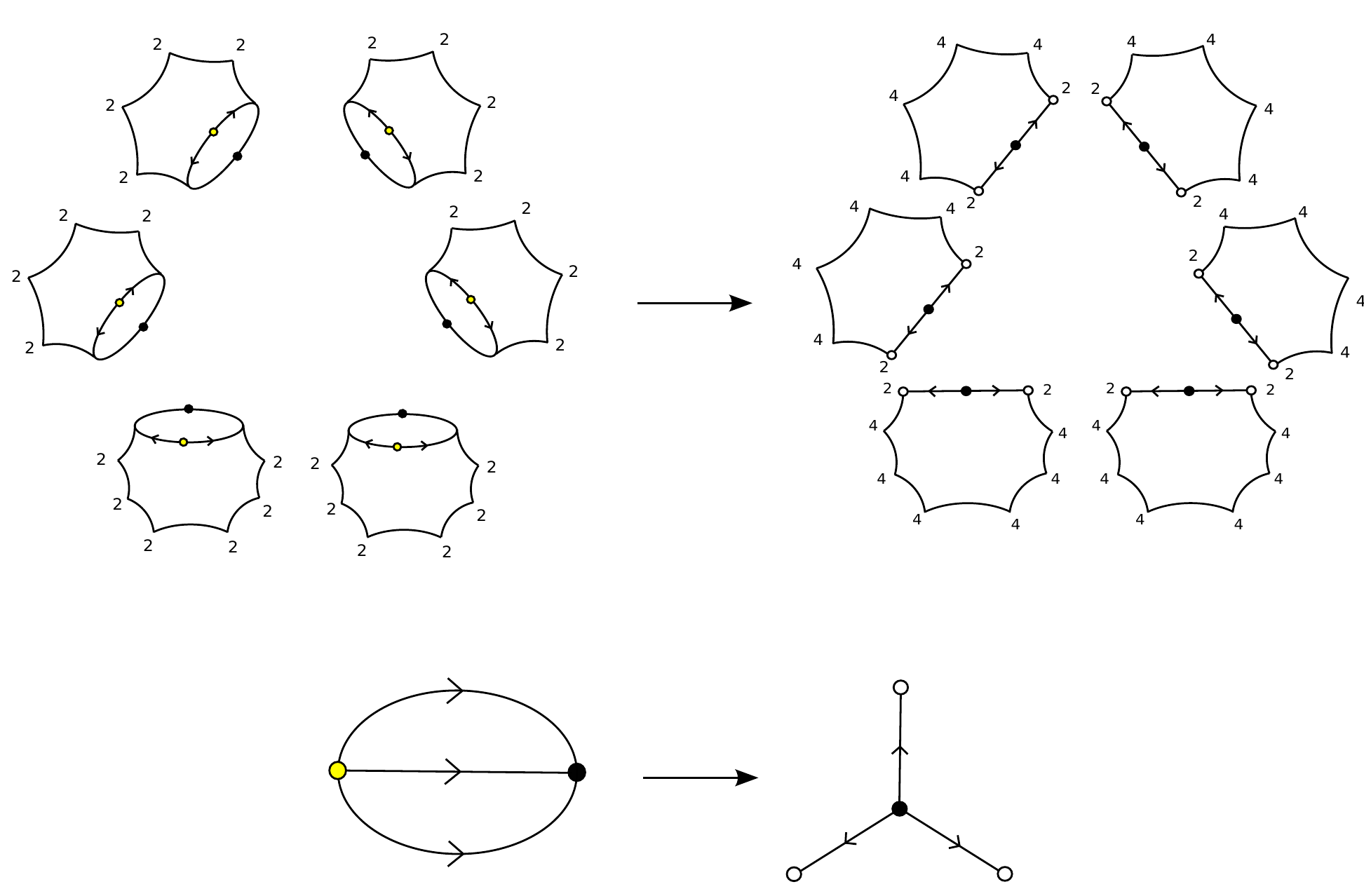}
     \put(50,45){\small{2}}
     \put(50,10){\small{2}}
     \put(55,0){\small{2}}
     \put(75,0){\small{2}}
     \put(65,17){\small{2}}
     \put(0,60){$\cX_1$}
     \put(96,60){$\cD$}
     \put(12,54){\small{$c_1$}}
     \put(5,42){\small{$c_1$}}
     \put(11,31.5){\small{$c_1$}}
     \put(26,31){\small{$c_1$}}
     \put(15,58){\small{$c_3$}}
     \put(7.5,45.5){\small{$c_3$}}
     \put(28,58){\small{$c_3$}}
     \put(34,45.5){\small{$c_3$}}
     \put(31,55){\small{$c_2$}}
     \put(37,42){\small{$c_2$}}
     \put(15,31.5){\small{$c_2$}}
     \put(31,31.3){\small{$c_2$}}
     \put(32,16.5){\small{$c_1$}}
     \put(32,10.8){\small{$c_2$}}
     \put(32,5){\small{$c_3$}}
     \put(67,11){\small{$e_1$}}
     \put(70,5.5){\small{$e_2$}}
     \put(60.5,5.5){\small{$e_3$}}
     \put(70.5,54){\small{$e_1$}}
     \put(64.5,42){\small{$e_1$}}
     \put(68,35){\small{$e_1$}}
     \put(84,35){\small{$e_1$}}
     \put(72,35){\small{$e_2$}}
     \put(88,35){\small{$e_2$}}
     \put(92.5,41.5){\small{$e_2$}}
     \put(86,54){\small{$e_2$}}
     \put(83,57){\small{$e_3$}}
     \put(90,44){\small{$e_3$}}
     \put(67,45){\small{$e_3$}}
     \put(73,57){\small{$e_3$}}
    \end{overpic}
     \caption{{\small Illustrated on the top row is the degree-$2$ cover $\cX_1 \rightarrow \cD$. The space $\cX_1$ contains six orbifolds, each with underlying space a disk and with either four or  six cone points of order two. The orbifolds are glued along the boundary of the disks as illustrated with the labeled arrows; all yellow vertices are identified, and all black vertices are identified. The cover restricted to the singular subspaces is illustrated below; the covering map is given by reflection through a vertical line through the left-hand graph.}}
     \label{figure:X1overD}
    \end{figure}

    \begin{figure}
     \begin{overpic}[scale=.33, tics=5]{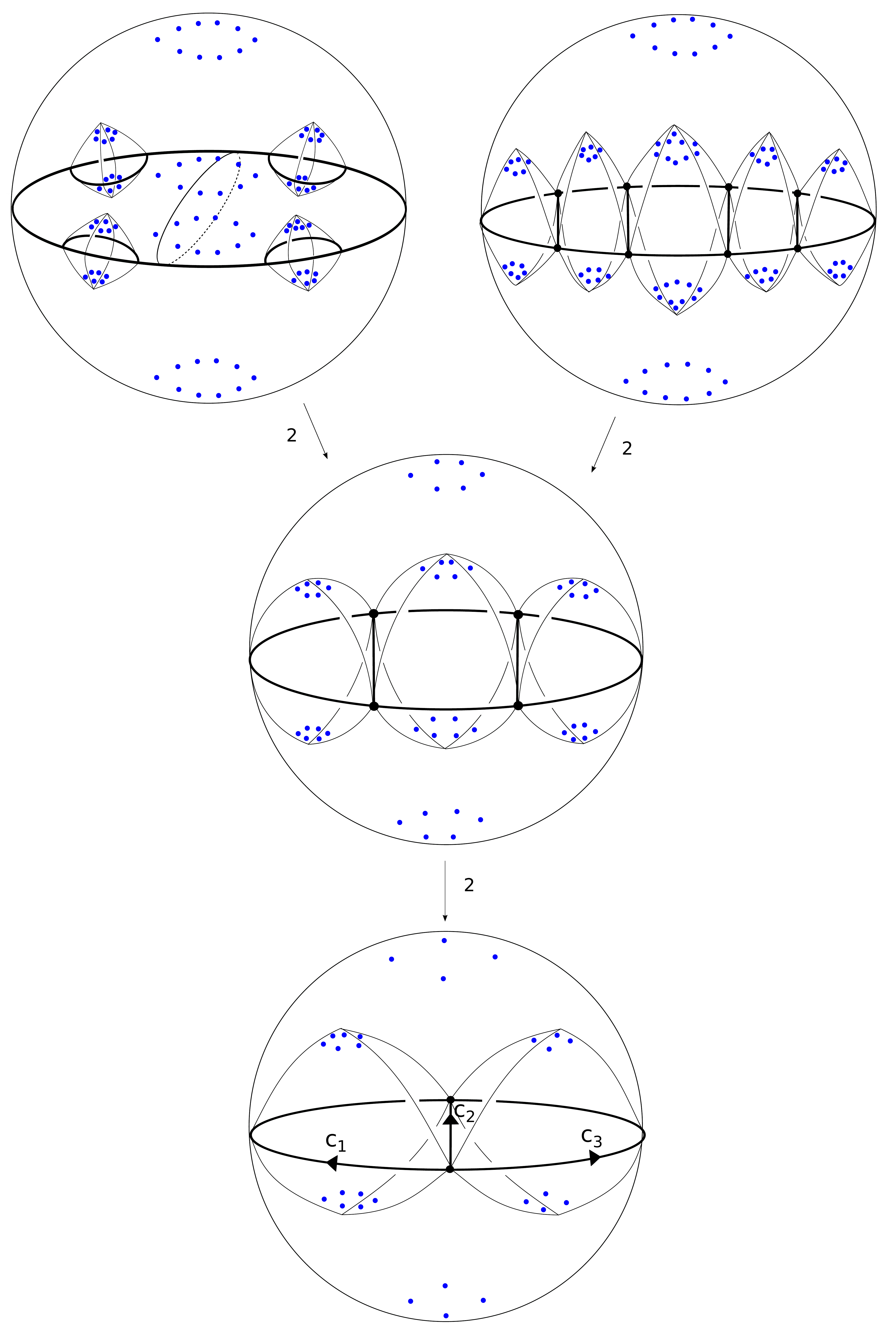}
     \put(12,17){$\cX_1$}
     \put(12,52){$\cX_2$}
     \put(0,95){$\cY$}
     \put(64,95){$\cZ$}
    \end{overpic}
\caption{{\Small Two coverings of an orbicomplex that are not homeomorphic, but which have the same orbifold fundamental group. The covering maps are each given by a $\Z/2\Z$ action of rotation about the $z$-axis. All of the blue points are orbifold points of order $2$.  }}
\label{r3_spheres}
\end{figure}

The remaining covering maps may be realized by rotation in $\R^3$; these are illustrated in Figure \ref{r3_spheres}. First, observe that the singular subspace of $\cX_1$ is the $\Theta$-graph, with two vertices of valence three and the three directed edges $\{c_1, c_2, c_3\}$ connecting the two vertices. This graph embeds in the plane and the boundary curves of the disk orbifolds $D_i$ are the three curves $c_1c_2^{-1}$, $c_1c_3^{-1}$, and $c_2c_3^{-1}$. Then, the space $\cX_1$ embeds in the $3$-ball $B^3 \subset \R^3$ as illustrated in Figure \ref{r3_spheres} so that the $\Theta$-graph embeds in the equatorial $xy$-plane. The copies of $D^2(2,2,2,2)$ with boundary $c_1c_3^{-1}$ may be viewed as the two hemispheres of the unit sphere. The copies of $D^2(2,2,2,2)$ with boundary $c_2c_3^{-1}$ may be viewed as the two hemispheres of a sphere embedded inside the unit sphere; likewise for the copies of $D^2(2,2,2,2,2,2)$. The remaining covering maps may be realized by rotations by $\pi$ about the $z$-axis in Euclidean space. The covering map restricted to each copy of $D^2(2,\ldots, 2)$ is either exactly $2$-to-$1$ or is the rotational covering map given in Lemma \ref{lemma:2fold}.

The two covers given in Figure \ref{r3_spheres} are not homeomorphic since their singular subspaces are not homeomorphic. It remains to show that these spaces are homotopy equivalent and hence have the same orbifold fundamental group. To see this, take a regular neighborhood of the singular locus of $\cY$ and $\cZ$ in the $xy$-plane to form a surface with genus zero and with six boundary components. The homotopy from the embedded graph to its regular neighborhood in the plane extends to homotopies from $\cY$ to a space $\cY'$ and from $\cZ$ to a space $\cZ'$, where the homotopy is the identity on the complement of the singular subspace. That is, the space $\cY'$ contains a surface of genus zero and with six boundary components $B_1, \ldots, B_6$. For $1 \leq i \leq 4$, two copies of $D^2(2,2,2,2,2,2)$ are identified to $B_i$ by homeomorphisms of the boundary curve of the disk orbifolds; for $5 \leq i \leq 6$, two copies of $D^2(2,2,2,2,2,2,2,2,2,2)$ are identified to $B_i$ by a homeomorphisms of the boundary curve of the disk orbifolds. The space $\cZ'$ is homeomorphic to $\cY'$. Thus, the orbicomplexes $\cY$ and $\cZ$ are homotopy equivalent, so their orbifold fundamental groups are isomorphic. \end{proof}

\subsection{Torsion-free covers}

   \begin{figure}
     \begin{overpic}[scale=.7, tics=5]{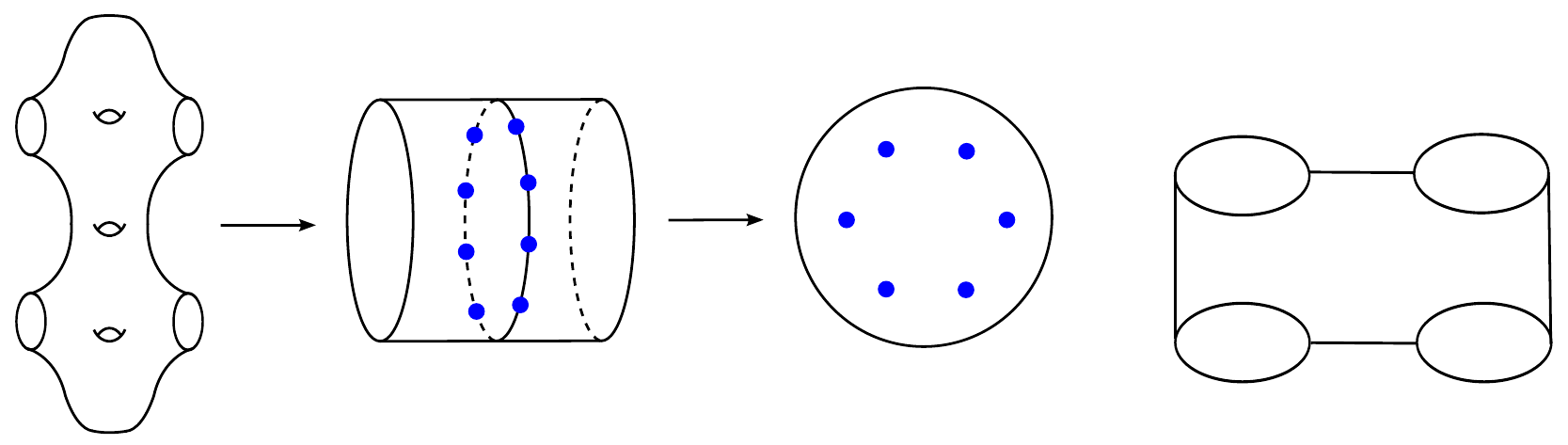}
     \put(16,16){\Small{$2$}}
     \put(45,16){\Small{$2$}}
     \put(85,22){\Small{$\Lambda$}}
     \put(85,11){\Small{$d_1$}}
     \put(85,2){\Small{$d_2$}}
     \put(78,16.5){\Small{$d_3$}}
     \put(93,16.5){\Small{$d_4$}}
     \put(93,6){\Small{$d_5$}}
     \put(78,6){\Small{$d_6$}}
     \end{overpic}     
\caption{{\Small On the left are degree-$2$ covers given by rotation by $\pi$ about a vertical axis positioned through the surface or orbifold. The blue points represent cone points of order $2$. On the right is the singular subspace of the orbicomplex $\cY$. }}
\label{figure:tf_covers}
\end{figure}

\begin{prop} \label{prop:tf_covers}
 Let $\cY$ and $\cZ$ be the orbicomplexes described in the proof of Theorem~\ref{toprigidity} and shown in Figure~\ref{r3_spheres}. There exist finite covers $\hY \rightarrow \cY$ and $\hZ \rightarrow \cZ$ so that $\pi_1(\hY)$ and $\pi_1(\hZ)$ are torsion-free, $\pi_1(\hY) \cong \pi_1(\hZ)$, and $\hY$ and $\hZ$ are not homeomorphic. 
\end{prop}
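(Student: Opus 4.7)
My plan is to construct connected degree-$2$ covers $\hY \to \cY$ and $\hZ \to \cZ$ whose deck actions have no orbifold fixed points, prove their fundamental groups are isomorphic via the regular-neighborhood technique of Theorem~\ref{toprigidity}, and distinguish the covers topologically using their $1$-dimensional singular subspaces.

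For the construction, I first observe that every vertex orbifold of $\cY$ or $\cZ$ has the form $D^2(2^n)$ with $n\in\{6,10\}$ and orbifold fundamental group $(\Z/2\Z)^{*n}$. The homomorphism $(\Z/2\Z)^{*n}\to\Z/2\Z$ sending every cone-point generator to the nontrivial element has torsion-free kernel, and the corresponding cover is a compact orientable surface of genus $(n-2)/2$ with two boundary circles, each mapping homeomorphically to the boundary of $D^2(2^n)$. (The boundary loop is freely homotopic to the product of the $n$ cone-point generators, and $n$ is even.) To assemble a global cover $\hY\to\cY$, I would take two disjoint copies $\Lambda^+,\Lambda^-$ of the $1$-dimensional singular subspace $\Lambda_{\cY}$ of $\cY$; for each disk orbifold in $\cY$ attached along a boundary circle $B\subset\Lambda_{\cY}$, attach the torsion-free double cover so that one of its boundary circles lies over the lift of $B$ in $\Lambda^+$ and the other over the lift in $\Lambda^-$. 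The result $\hY$ is a connected double cover because each vertex-piece cover joins $\Lambda^+$ to $\Lambda^-$, and it is torsion-free by construction. An identical recipe produces $\hZ\to\cZ$.

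For the isomorphism of fundamental groups, I would use the homotopy models $\cY'\simeq\cY$ and $\cZ'\simeq\cZ$ from the proof of Theorem~\ref{toprigidity}. Collapsing the $1$-dimensional singular subspaces of $\hY$ and $\hZ$ to regular neighborhoods in the planar embedding yields models $\hY'$ and $\hZ'$, each consisting of two disjoint copies of the planar genus-zero surface with six boundary components, joined by surfaces-with-two-boundary-circles (the torsion-free covers of the disk orbifolds) in a fixed combinatorial pattern. The homeomorphism $\cY'\cong\cZ'$ established in Theorem~\ref{toprigidity} respects this pattern and therefore lifts to $\hY'\cong\hZ'$, yielding $\pi_1(\hY)\cong\pi_1(\hY')\cong\pi_1(\hZ')\cong\pi_1(\hZ)$.

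The main obstacle is to show $\hY\not\cong\hZ$. The plan is to characterize the $1$-dimensional singular subspace of each cover intrinsically as the set of points admitting no open neighborhood homeomorphic to an open surface; any homeomorphism must then preserve it. By construction, the singular subspace of $\hY$ is $\Lambda_{\cY}\sqcup\Lambda_{\cY}$ and that of $\hZ$ is $\Lambda_{\cZ}\sqcup\Lambda_{\cZ}$, with $\Lambda_{\cY}\not\cong\Lambda_{\cZ}$ by the proof of Theorem~\ref{toprigidity}. Counting connected components of each homeomorphism type yields $\Lambda_{\cY}\sqcup\Lambda_{\cY}\not\cong\Lambda_{\cZ}\sqcup\Lambda_{\cZ}$, ruling out any homeomorphism $\hY\cong\hZ$.
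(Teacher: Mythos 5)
Your proposal is correct, but it constructs the covers by a genuinely different route than the paper. The paper takes \emph{degree-$4$} covers: each disk orbifold with $n$ cone points of order two is covered by a surface $S_{g,4}$ with four boundary circles ($g=3$ for $n=6$, $g=7$ for $n=10$), exhibited concretely as a composition of two rotations by $\pi$ in $\R^3$ (surface $\to$ annulus with cone points $\to$ disk with cone points), and these four boundary circles are attached to four disjoint copies of the singular graph $\Lambda$. You instead pass to the index-two subgroup cut out by the homomorphism from the orbifold fundamental group to $\Z/2\Z$ that sends every cone-point generator to the nontrivial element and kills $\pi_1(\Lambda)$; this is well defined precisely because each disk orbifold has an even number of cone points, its kernel meets every conjugate of every cone-point stabilizer trivially (so torsion-freeness is immediate from the Kurosh/Bass--Serre description), and the resulting degree-$2$ cover has two copies of $\Lambda$ joined by the surfaces $S_{(n-2)/2,2}$. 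Both arguments then run the same endgame: the regular-neighborhood homotopy models from the proof of Theorem~\ref{toprigidity} give the isomorphism of fundamental groups, and the disjoint copies of the non-homeomorphic singular graphs obstruct a homeomorphism. Your version buys economy (degree $2$ rather than $4$) and an algebraic certificate of torsion-freeness; the paper's version keeps every covering map in the chain realized by an explicit $\Z/2\Z$-symmetry, in the visual style of the rest of the argument. Your added observation that the singular subspace is intrinsically characterized as the set of points with no surface neighborhood---so that any homeomorphism must preserve it---is a detail the paper leaves implicit and is worth stating.
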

\begin{proof}
  We first describe the finite cover $\hY \rightarrow \cY$. The orbicomplex $\cY$ has a singular subspace the planar graph $\Lambda$, shown in Figure~\ref{figure:tf_covers}. Let $d_1 \ldots, d_6$ denote the boundary curves of the six planar regions in the complement of $\Lambda \subset \R^2$ as marked in Figure~\ref{figure:tf_covers}. Glued to $d_1$ and $d_2$ are two copies of $D^2(2,2,2,2,2,2,2,2,2,2)$, and glued to each of $d_3, \ldots, d_6$ are two copies of $D^2(2,2,2,2,2,2)$. As shown in Figure~\ref{figure:tf_covers}, the surface $S_{g,4}$ of genus $g$ and four boundary components forms a degree-$4$ cover of $D^2(\,\underbrace{2, \ldots, 2}_\text{$n$}\,)$, where $g=3$ if $n=6$ and $g=7$ if $n=10$. Indeed, embed the surface in $\R^3$ so that the boundary components are arranged symmetrically in pairs about a vertical axis and the holes of the surface lie along the axis. Rotate by $\pi$ about the vertical axis to produce an orbifold with underlying space an annulus and with $2g+2$ cone points of order $2$. Arrange the cone points symmetrically along a core curve of the annulus and rotate by $\pi$ about an axis that skewers the core curve in two non-singular points to obtain $D^2(\,\underbrace{2, \ldots, 2}_\text{$n$}\,)$. To form $\hY$, take four copies of the graph $\Lambda$; for $i=1,2$, glue to the four copies of $d_i$ the four boundary curves of $S_{7,4}$ by homeomorphisms; and, for $i=3, \ldots, 6$, glue to the four copies of $d_i$ the boundary curves of $S_{3,4}$ by homeomorphisms. Since each boundary curve of $S_{g,4}$ covers the boundary of $D^2(\,\underbrace{2, \ldots, 2}_\text{$n$}\,)$ by degree-$1$, $\hY$ forms a degree-$4$ cover of $\cY$. 
  
  The finite cover $\hZ \rightarrow \cZ$ is constructed similarly. By analogous arguments to those in the proof of Theorem~\ref{toprigidity}, $\hY$ and $\hZ$ are homotopy equivalent. The singular subspace of $\hY$ is homeomorphic to four copies of the singular subspace of $\cY$; likewise, the singular subspace of $\hZ$ is homeomorphic to four copies of the singular subspace of $\cZ$. Thus, $\hY$ and $\hZ$ are not homeomorphic. 
\end{proof}

\bibliographystyle{alpha}
\bibliography{refs}

\end{document}